\newtheorem{te}{Theorem}
\begin{document}

\vspace{3cm}

\title {  The star sequence   and the general first Zagreb index}

\author{Leonid Bedratyuk, Oleg Savenko}

\address{Khmelnitskiy national university, Instytutska, 11,  Khmelnitsky, 29016, Ukraine}

\email{leonid.uk@gmal.com}

\begin{abstract}{For a simple graph we introduce a notion of the star sequence and prove that the star sequence and the frequently sequences of a graph  are inverses of each other   from a combinatorial point of view. As a consequence, we express  the general  first Zagreb index in terms of the star sequence. Also,   we calculate the ordinary generating function  and find a linear recurrence relation for the sequence of  the general  first Zagreb indexes.

}

\end{abstract}

\maketitle

\section{Introduction}

Let $G$ be a simple graph whose vertex and edge sets are $V (G)$ and $E(G)$, respectively. Let $\deg(v)$ be the degree  of the vertex $v \in  V (G).$ For any real $p$ the general Zagreb index is defined by  

$$
Z_p(G)=\sum_{v \in V(G)} \deg(v)^p,
$$

see \cite{IG} for more details.

Put $n=|V(G)|$ and $m=|E(V)|$. We have $Z_0(G)=n$ and $Z_1(G)=2m$ due to the  Handshaking lemma.

 The well-known ordinary first Zagreb index $M_ 1$  is a special case of the general Zagreb index $Z_p(G)$ for  $p=2.$ 

It is easy to see  ( \cite[ Theorem 8.1]{FH})  that 

$$
Z_2(G)=2m+2q,
$$

where  and $q$ is the number of subraphs that are isomorphic to the path graph $P_3.$ Taking into account that $m$ is the number of subraphs that are isomorphic to the star graph $S_1$ and that $P_3 \cong S_2$   we may rewrite the expression for $Z_2(G)$  as follows

$$
Z_2(G)=2 S_1(G)+2 S_2(G), 
$$

where $S_1(G), S_2(G)$ denote the number of  subgraph of $G$  that are isomorphic to the star graphs $S_1$  and $S_2$, respectivelly.

In the paper, we  generalize this  expression for the general first Zagreb index $Z_p(G)$  for any natural  $p.$

Let  $S_k$ be the star graph   defined as the complete bipartite graph $K_{1,k}$. 

Denote by $S_k(G)$ the number of subgraphs  of $G$  that are isomorphic to the star  $S_k.$ 

 For instance $S_1(G)$ is equal to the number  of edges of  $G$  and $S_{n-1}(G)$  is equal to the number of vertices of maximal degrees  $n-1,$ $n=|V(G)|$.  

The sequence 

$$
2S_1(G), S_2(G), \ldots, S_{n-1}(G)
$$

is called  a \textit{star sequence} of a graph $G.$

We can define some types of  graphs in terms of its sequence star.  For example, 

for  a graph $G$  to be  the path  $P_n$ it is necessary and sufficient to have    $S_1(G)=n-1,  S_2(G)=n-2,$ and $ S_i(G)=0$  for $i>2.$ Similarly, a graph $G$ is  a $k$-regular iff the following conditions hold:

$$2S_1=\binom{k}{1}S_k(G), S_i=\binom{k}{i}S_k(G), \text{  and $ S_i(G)=0$ for  $i>k.$} $$

The main result of the  paper is the formula for expressing  of the general first Zagreb index in terms of the star sequence

$$
Z_p(G)=2S_1(G)+\sum_{i=2}^{p} i! {\left\{ {p\atop i}\right\}} S_i(G),
$$

here  $\displaystyle \left\{ {p \atop i}\right\}$ are the Stirling numbers of the second kind.

The result  can be considered  as a wide generalization of the  Handshaking Lemma.

Also, we calculate the generating function for the integer sequence $\{Z_p(G) \}:$

$$
\sum_{p=0}^\infty Z_p(G) t^p =\frac{\displaystyle \sum_{k=0}^{n-1} \left( \sum_{i=0}^k \left [ {n+1 \atop n+1-(k-i)}\right ] Z_i(G) \right) t^k }{(1-t)(1-2 t) \cdots (1-n t)},
$$

and find the recurrence relation for the integer sequence $\{ Z_p(G) \}:$

$$
Z_p(G)+\sum_{i=1}^{n} \left [ {p+1 \atop p+1- i}\right ] Z_{p-i}(G)=0, p \geq n, 
$$

here  $\displaystyle \left[ {p \atop i}\right]$ are the signed Stirling numbers of the first  kind.

\section{Star and frequently sequences}

Let $f_i$ denotes the number of vertices of degree $i, i=0\ldots n-1.$

The integer sequence  

$$
f_1, f_2, \ldots, f_{n-1},
$$

is called \textit{the frequency  sequence} of a graph. 

 The frequency  sequence  were studied intensively by several  authors,  see \cite{CHI}, \cite{MR}.

There exists a close connection between the star sequence  and the frequency  sequence of a graph.

Let us recall that two sequences $\{a_n \}, \{ b_n \}$ that satisfies the following conditions 

$$
a_i=\sum_{k=i}^n \binom{k}{i} b_k,  b_i=\sum_{k=i}^n (-1)^{k-i}\binom{k}{i} a_k, 
$$

are an example of a pair of  inverse sequences, see  \cite{Rio}  for more details.

The following theorem holds.

\begin{te} Let $G$ be a simple graph. Then its star and frequency sequences are inverses of each other: 

\begin{gather*}
f_i=\sum_{k=i}^{n-1} (-1)^{k-i} \binom{k}{i}S_k(G),  1 < i \leq n-1,\\
f_1=2S_1(G)+\sum_{k=2}^{n-1} (-1)^{k-i} k S_k(G),
\end{gather*}

and 

\begin{gather*}
2S_1(G)=\sum_{i=1}^{n-1} i f_i,\hspace{1cm}
S_k(G)=\sum_{i=k}^{n-1} \binom{i}{k} f_i, 1 < k \leq n-1.
\end{gather*}

\end{te}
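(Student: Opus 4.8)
The plan is to prove the second pair of identities first, by a direct counting argument, and then to deduce the first pair by applying the binomial inversion recalled immediately before the statement.

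The crux is a single counting principle for stars. For $k \geq 2$ a subgraph of $G$ isomorphic to $S_k$ has a well-defined \emph{center}, namely the unique vertex of degree $k$ inside the star, and it is completely determined by its center $v$ together with a choice of $k$ of the $\deg(v)$ neighbours of $v$ as leaves. Since the center is unique, no star is counted twice, so
$$S_k(G)=\sum_{v\in V(G)}\binom{\deg(v)}{k},\qquad k\geq 2.$$
Collecting the vertices according to their degree $i$, of which there are $f_i$, turns this into $S_k(G)=\sum_{i=k}^{n-1}\binom{i}{k}f_i$, which is the asserted formula for $S_k(G)$; the lower limit may be taken to be $k$ because $\binom{i}{k}=0$ for $i<k$.

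The case $k=1$ is where the only real subtlety lies, and it is exactly what forces the star sequence to begin with $2S_1(G)$ rather than $S_1(G)$. An edge, viewed as a copy of $S_1$, has two vertices each of which could serve as center, so the naive count $\sum_{v}\binom{\deg(v)}{1}=\sum_v\deg(v)$ counts every edge twice; by the Handshaking lemma this equals $2m=2S_1(G)$. Hence if I set $a_1=2S_1(G)$ and $a_k=S_k(G)$ for $k\geq 2$, together with $b_i=f_i$, then the uniform relation $a_i=\sum_{k=i}^{n-1}\binom{k}{i}b_k$ holds for every $i\geq 1$, including $i=1$, where it reads $2S_1(G)=\sum_{i=1}^{n-1}i\,f_i$.

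Finally I would invoke the inversion formula stated in the excerpt: since $a_i=\sum_{k=i}^{n-1}\binom{k}{i}b_k$ holds, the companion identity $b_i=\sum_{k=i}^{n-1}(-1)^{k-i}\binom{k}{i}a_k$ holds as well. For $i>1$ this is precisely $f_i=\sum_{k=i}^{n-1}(-1)^{k-i}\binom{k}{i}S_k(G)$, while for $i=1$ the $k=1$ term contributes $a_1=2S_1(G)$ and the remaining terms contribute $\sum_{k=2}^{n-1}(-1)^{k-1}k\,S_k(G)$, giving the stated expression for $f_1$. The main thing to watch throughout is the bookkeeping of the $k=1$ term: all four identities form a single inversion pair, but the edge/star double count means the first entry must carry the factor $2$, and the formula for $f_1$ must be split off from the generic formula for $f_i$ accordingly.
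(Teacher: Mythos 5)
Your proof is correct, but it runs in the opposite direction from the paper's. The paper first establishes the formulas for $f_k$ in terms of the star sequence: it starts from $f_{n-1}=S_{n-1}(G)$, corrects $f_{n-2}$ for the copies of $S_{n-2}$ sitting inside each $S_{n-1}$, and then asserts by inclusion--exclusion that $f_k=\sum_{j=k}^{n-1}(-1)^{j-k}\binom{j}{k}S_j(G)$ for $k>1$, treating $k=1$ separately via the edge double count; the inverse formulas $2S_1(G)=\sum_i i f_i$ and $S_k(G)=\sum_{i\geq k}\binom{i}{k}f_i$ are then simply stated (the first as the Handshaking lemma), without a counting argument of their own. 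You instead prove that easy direction rigorously first, via the closed form $S_k(G)=\sum_{v}\binom{\deg(v)}{k}$ for $k\geq 2$ (unique center plus a choice of leaves), observe that $k=1$ is precisely where the factor $2$ enters because both endpoints of an edge can serve as center, and then deduce the alternating formulas by the binomial inversion that the paper recalls immediately before the theorem. Your route is the tighter one: the center-counting identity is immediate to verify, the inversion lemma (Riordan) does all of the inclusion--exclusion bookkeeping formally, and your uniform setup $a_1=2S_1(G)$, $a_k=S_k(G)$ for $k\geq 2$ makes the $i=1$ case fall out of the same inversion rather than being argued ad hoc --- it also yields the correct sign $(-1)^{k-1}$ in the $f_1$ formula, where the statement as printed carries a stray exponent $(-1)^{k-i}$. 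What the paper's direction buys is a direct combinatorial picture of how vertices of degree $k$ get overcounted among the larger stars containing them (its \emph{``it is becoming obvious''} step), but as a complete proof yours is the more watertight of the two, since each of your two steps is fully justified.
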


\begin{proof}

Let us count the number of vertices that have the degree $k>1.$ It is clear that the number of vertices of the maximal degree    $n-1$  is equal to the number of $G$-subgraphs that are isomorphic to the star graph  $S_{n-1}$. Thus  $f_{n-1}=S_{n-1}(G).$ To count the number of vertices of degree  $n-2$, observe that the number  $f_{n-2}$ is not equal to  $S_{n-2}(G)$. In fact, the star graph $S_{n-2}$ is subgraph  of  $S_{n-1}$ and  some of the vertices of degree $n-2$ will be counted twice. Since each graph  $S_{n-1}$ consists of exactly  $\binom{n-1}{n-2}$ subgraphs  $S_{n-2}$ then 

$$
f_{n-2}=S_{n-2}(G)-\binom{n-1}{n-2} S_{n-1}(G).
$$

 It is becoming obvious that for arbitrary  $f_k, k>1$ we can use  the inclusion-exclusion principle:

$$
f_k=S_{k}(G)-\binom{k+1}{k}S_{k+1}(G)+\binom{k+2}{k}S_{k+2}(G)+\cdots+(-1)^{n-k} \binom{n-1}{k}S_{n-1}(G).
$$

For the case $k=1$ there exist  $S_1(G)$ edges and every one  of them has  $2$  vertices of degree $1$. Thus 

$$
f_1=2S_1(G)-2S_2(G)+3S_3(G)-4S_4(G)+\cdots+(-1)^{n-2}S_{n-1}(G).
$$

 Now we are able to express the star sequence of graph $G$  in terms of  its  frequency sequence.

\begin{align*}
&2S_1(G)=\sum_{i=1}^{n-1} \binom{i}{1} f_i,  \text{ the Handshaking lemma } \\
&S_k(G)=\sum_{i=k}^{n-1} \binom{i}{k} f_i.
\end{align*}

\end{proof}

The following theorem can be proved  in the same way as Theorem 1.

\begin{te}  We have

\begin{gather*}
2S_1(G)+\sum_{i=2}^{n-1} (-1)^{i-1} S_i(G)=\sum_{i=1}^{n-1} f_i,\\
2S_1(G)+\sum_{i=2}^{n-1} (-1)^{i-1} i^m  S_i(G)=\sum_{k=1}^{m } (-1)^k  k! \left\{ {m \atop k}\right\} f_k.
\end{gather*}

\end{te}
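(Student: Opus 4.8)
The plan is to deduce this theorem from the inverse relations already proved in Theorem~1, rather than repeat the inclusion--exclusion count. I would substitute the ``forward'' relations $2S_1(G)=\sum_{j=1}^{n-1} j\,f_j$ and $S_i(G)=\sum_{j=i}^{n-1}\binom{j}{i}f_j$ (valid for $i\ge 2$) directly into the left-hand side of each identity, so that every $S_i(G)$ is replaced by a sum over the frequencies $f_j$ and the whole left-hand side becomes a double sum in $i$ and $j$.

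I would then interchange the order of summation and read off the coefficient of each $f_j$. The key observation is that the term $2S_1(G)$ contributes $j\,f_j=1^m\binom{j}{1}f_j$, which is exactly the $i=1$ summand of $\sum_{i=1}^{j}(-1)^{i-1}\binom{j}{i}i^m$; this is the precise mechanism by which the anomalous factor $2$ on $S_1$ is absorbed and the coefficient of $f_j$ becomes the clean sum $\sum_{i=1}^{j}(-1)^{i-1}\binom{j}{i}i^m$. The crucial step is to evaluate this inner sum by the classical finite-difference formula for the Stirling numbers of the second kind,
$$k!\left\{{m\atop k}\right\}=\sum_{i=0}^{k}(-1)^{k-i}\binom{k}{i}i^m,$$
taken with $k=j$. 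Since $\left\{{m\atop j}\right\}=0$ whenever $j>m$, the resulting outer sum over $j$ automatically truncates to $1\le j\le m$, giving the stated right-hand side.

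The part that needs the most care is the bookkeeping at the two boundaries of the inner sum. At $i=0$ one must distinguish two regimes: for $m\ge1$ we have $0^m=0$, so the inner sum may be extended down to $i=0$ at no cost and the finite-difference identity applies verbatim; for the first displayed identity, which is the degenerate case $m=0$, the value $0^0=1$ alters the count and the inner sum instead collapses through $\sum_{i=0}^{j}(-1)^i\binom{j}{i}=0$ to the constant $1$, which is exactly what produces the right-hand side $\sum_{i=1}^{n-1}f_i$. The remaining subtlety is the global sign: the identity above attaches a factor $(-1)^{j-1}$ to $j!\left\{{m\atop j}\right\}$, so I would pin down the sign convention of the right-hand side against a small instance such as $G=P_3$ (where $f_1=2,\ f_2=1,\ S_1=2,\ S_2=1$) before committing to the final form.
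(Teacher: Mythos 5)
Your proposal is correct, and it takes a genuinely different route from the paper: the paper offers no written argument for this theorem at all, saying only that it ``can be proved in the same way as Theorem 1,'' i.e.\ by repeating the inclusion--exclusion count of star subgraphs. You instead take Theorem 1's forward relations $2S_1(G)=\sum_{j=1}^{n-1} j f_j$ and $S_i(G)=\sum_{j\geq i}\binom{j}{i}f_j$ as already proved, interchange summation, and collapse the coefficient of each $f_j$ via the finite-difference formula $j!\left\{ {m \atop j}\right\}=\sum_{i=0}^{j}(-1)^{j-i}\binom{j}{i}i^m$. This is precisely the mechanism the paper uses later (in the opposite direction) to prove Theorem 4, so your route unifies Theorems 2 and 4 under one algebraic identity and avoids a second combinatorial count. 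Your boundary bookkeeping is also right: for $m\geq 1$ the $i=0$ term vanishes since $0^m=0$, the absorption of the factor $2$ on $S_1$ via $j=\binom{j}{1}1^m$ is exactly what makes the inner sum start cleanly at $i=1$, and for $m=0$ the inner sum equals $1$ by the alternating binomial identity, which yields the first displayed identity.

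The sign discrepancy you flagged is real, and your suggested numerical check settles it \emph{against} the paper's statement: your derivation produces the coefficient $(-1)^{j-1}j!\left\{ {m \atop j}\right\}$, whereas the printed right-hand side carries $(-1)^k$. For $G=P_3$ and $m=1$ the left-hand side is $2S_1(G)-2S_2(G)=4-2=2$, while the paper's right-hand side gives $(-1)\cdot 1!\left\{ {1 \atop 1}\right\}f_1=-2$; similarly for $G=K_3$ and $m=2$ the left-hand side is $6-12=-6$ versus the paper's $+6$. So the theorem as stated contains a sign error: the correct form is $\sum_{k=1}^{m}(-1)^{k-1}k!\left\{ {m \atop k}\right\}f_k$, and your argument, carried out as written, proves exactly this corrected identity. (The slip is of a piece with the typo in Theorem 1, where the exponent $(-1)^{k-i}$ in the formula for $f_1$ should read $(-1)^{k-1}$.) Your proof is therefore complete and, unlike the paper's one-line assertion, actually pins down the right sign.
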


For the case $m=0$  we get 

\begin{gather*}
\sum_{i=1}^{n-1} f_i=2S_1(G)+\sum_{i=2}^{n-1} (-1)^{i-1}S_i(G),\\
\end{gather*}

This fact implies the following interesting result:

\begin{te}  

$$
\sum_{uv \in G(V)}\left( \frac{1}{d_u}+\frac{1}{d_v} \right)=S_1(G)+\sum_{i=1}^{n-1} (-1)^{i-1} S_i(G).
$$

\end{te}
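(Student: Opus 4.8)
The plan is to reduce the left-hand side to a purely combinatorial quantity --- the number of non-isolated vertices --- and then to invoke the $m=0$ identity recorded immediately before the statement. The whole difficulty is already packaged into Theorem 2, so the present statement is essentially a corollary.

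First I would rewrite the edge-sum by exchanging the order of summation. Each edge $uv$ contributes the term $1/d_u$ to its endpoint $u$ and the term $1/d_v$ to its endpoint $v$, so the sum can be reorganized as a sum over vertices. For a fixed vertex $v$ of degree $d_v \geq 1$, the reciprocal $1/d_v$ is counted once for each of the $d_v$ edges incident to $v$, contributing $d_v \cdot (1/d_v) = 1$. Summing over all vertices of positive degree therefore yields exactly the number of non-isolated vertices, which is $\sum_{i=1}^{n-1} f_i$. This double-counting step is the crux of the argument and the only place where the specific form of the summand $1/d_u + 1/d_v$ is used.

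Next I would apply the $m=0$ case stated just above the theorem, namely $\sum_{i=1}^{n-1} f_i = 2S_1(G) + \sum_{i=2}^{n-1}(-1)^{i-1}S_i(G)$. Finally I would check that the claimed right-hand side collapses to this same expression: isolating the $i=1$ term of $\sum_{i=1}^{n-1}(-1)^{i-1}S_i(G)$ produces $S_1(G)$, whence $S_1(G) + \sum_{i=1}^{n-1}(-1)^{i-1}S_i(G) = 2S_1(G) + \sum_{i=2}^{n-1}(-1)^{i-1}S_i(G)$, matching the previous line. The identity then follows by transitivity.

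I do not anticipate a genuine obstacle. The only point requiring a small amount of care is the treatment of isolated vertices: the summand $1/d_u + 1/d_v$ is defined only for vertices incident to an edge, so the edge-sum automatically ignores isolated vertices, which is precisely why the count on the right is $\sum_{i=1}^{n-1} f_i$ rather than $n$.
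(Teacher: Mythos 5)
Your proposal is correct and follows essentially the same route as the paper: both reduce the edge-sum to the number of non-isolated vertices $n-f_0=\sum_{i=1}^{n-1}f_i$, apply the $m=0$ case of Theorem~2, and absorb the extra $S_1(G)$ term by peeling off the $i=1$ summand. The only difference is that you establish the identity $\sum_{uv}\bigl(\tfrac{1}{d_u}+\tfrac{1}{d_v}\bigr)=n-f_0$ yourself by a direct double-counting argument (each non-isolated vertex contributes $d_v\cdot\tfrac{1}{d_v}=1$), whereas the paper simply cites it from the literature, so your version is marginally more self-contained.
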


\begin{proof}

From \cite{DM}  we know that 

$$
\sum_{uv \in G(V)}\left( \frac{1}{d_u}+\frac{1}{d_v} \right)=n-f_0.
$$

The number of isolated vertices  $f_0$  of a graph can be determined from its star sequence. Indeed

$$
2S_1(G)+\sum_{i=2}^{n-1} (-1)^{i-1} S_i(G)=\sum_{i=1}^{n-1}f_i=\sum_{i=0}^{n-1}f_i-f_0=n-f_0.
$$

Thus 

$$
\sum_{uv \in G(V)}\left( \frac{1}{d_u}+\frac{1}{d_v} \right)=S_1(G)+\sum_{i=1}^{n-1} (-1)^{i-1} S_i(G).
$$ 

\end{proof}

\section{The first general Zagreb index}

Now we can express the first general Zagreb index in term of star sequence.

\begin{te}

$$
Z_p(G)=2S_1(G)+\sum_{i=2}^{p} i! {\left\{ {p\atop i}\right\}} S_i(G),
$$

here  $\displaystyle \left\{ {p \atop i}\right\}$  is  the Stirling number of the second kind. 

\end{te}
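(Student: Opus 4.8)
The plan is to reduce the monomial $\deg(v)^p$ appearing in the definition $Z_p(G)=\sum_{v\in V(G)}\deg(v)^p$ to a binomial sum via the classical identity that expresses a power as a combination of falling factorials. Concretely, for every nonnegative integer $p$ and every real $x$ one has
$$
x^p=\sum_{i=0}^{p} i!\left\{ {p \atop i}\right\}\binom{x}{i},
$$
since the Stirling numbers of the second kind are exactly the coefficients in the expansion of $x^p$ into the falling factorials $x(x-1)\cdots(x-i+1)=i!\binom{x}{i}$. I would substitute $x=\deg(v)$, sum over all vertices, and interchange the two finite summations to obtain
$$
Z_p(G)=\sum_{i=0}^{p} i!\left\{ {p \atop i}\right\}\sum_{v\in V(G)}\binom{\deg(v)}{i}.
$$

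The heart of the argument is to recognize the inner sum $\sum_{v}\binom{\deg(v)}{i}$ as a term of the star sequence. Grouping the vertices according to their degree gives $\sum_{v}\binom{\deg(v)}{i}=\sum_{j}\binom{j}{i}f_j$, and for $i>1$ this is precisely the formula $S_i(G)=\sum_{j=i}^{n-1}\binom{j}{i}f_j$ established in Theorem~1 (it also matches the obvious combinatorial count: a star $S_i$ with $i>1$ is pinned down by its unique center $v$ together with a choice of $i$ of its $\deg(v)$ neighbours). For the two lowest indices the same grouping gives $\sum_{j}\binom{j}{0}f_j=n$ when $i=0$ and $\sum_{j}j\,f_j=2S_1(G)$ when $i=1$, the latter being the Handshaking lemma.

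It then remains only to assemble the boundary terms. Because $\left\{ {p \atop 0}\right\}=0$ for $p\geq 1$, the $i=0$ contribution vanishes; because $\left\{ {p \atop 1}\right\}=1$, the $i=1$ contribution is exactly $2S_1(G)$; and because $\left\{ {p \atop i}\right\}=0$ whenever $i>p$, the summation index may be capped at $p$. Collecting these observations yields
$$
Z_p(G)=2S_1(G)+\sum_{i=2}^{p} i!\left\{ {p \atop i}\right\}S_i(G),
$$
which is the asserted formula.

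Once the Stirling expansion is in hand the computation is routine, so I expect the only delicate point to be the separate treatment of the indices $i=0$ and $i=1$, where the clean identification $\sum_{v}\binom{\deg(v)}{i}=S_i(G)$ breaks down: an edge (the star $S_1$) has two degree-one endpoints and is therefore undercounted by a factor of two, while $S_0$ is not part of the star sequence at all. Handling these two terms carefully is exactly what forces the asymmetric leading coefficient $2S_1(G)$ and distinguishes the formula from a uniform sum over all $i$.
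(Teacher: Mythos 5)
Your proof is correct, but it takes a genuinely different route from the paper's. The paper starts from $Z_p(G)=\sum_i i^p f_i$, substitutes the \emph{backward} (alternating) member of the inverse pair from Theorem~1, namely $f_i=\sum_{k\geq i}(-1)^{k-i}\binom{k}{i}S_k(G)$, interchanges the two sums, and then collapses the inner alternating sum with the finite-difference identity $\sum_{i=1}^k (-1)^{k-i}\binom{k}{i} i^p = k!\left\{ {p \atop k}\right\}$ (identity (6.19) of Graham--Knuth--Patashnik). You instead expand the power itself, $x^p=\sum_i i!\left\{ {p \atop i}\right\}\binom{x}{i}$, set $x=\deg(v)$, and identify $\sum_v \binom{\deg(v)}{i}$ with $S_i(G)$ for $i\geq 2$ --- which is the \emph{forward} member of the same inverse pair, $S_i(G)=\sum_j \binom{j}{i}f_j$, and also has the direct combinatorial reading you give (a star with $i\geq 2$ rays has a unique center). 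So the two arguments are dual uses of the same inversion, but yours avoids the alternating sums and the resummation identity entirely, and your explicit bookkeeping at the boundary --- $i=0$ killed by $\left\{ {p \atop 0}\right\}=0$ for $p\geq 1$, and $i=1$ contributing $2S_1(G)$ via the Handshaking lemma --- is actually tidier than the paper's treatment, where the extra $S_1(G)$ arising from $f_1=2S_1(G)-\cdots$ is absorbed rather tersely into the displayed computation (the paper's intermediate lines read $S_1(G)+\sum_{k=1}^{p}k!\left\{ {p \atop k}\right\}S_k(G)$, with the $k=1$ term silently supplying the second copy of $S_1(G)$). What the paper's route buys in exchange is narrative economy: it reuses Theorem~1 directly and keeps the ``star and frequency sequences are inverses'' theme central, whereas your route is self-contained modulo the classical falling-factorial expansion.
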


\begin{proof}

 It is easy to see that 

$$
Z_p(G)=\sum_{i=1}^{n-1}i^p f_i.
$$

Then

\begin{gather*}
\sum_{i=1}^{n-1} i^p f_i=S_1(G)+\sum_{i=1}^{n-1} i^p \sum_{k=i}^{n-1} (-1)^{k-i} \binom{k}{i}S_k(G)=\\=S_1(G)+\sum_{k=1}^{n-1 } \left( \sum_{i=1}^k i^p (-1)^{k-i} \binom{k}{i} \right) S_k(G).
\end{gather*}

By using the well-known identity  (see  \cite{GKP}, the identity (6.19))

$$
\sum_{i=1}^k i^p (-1)^{k-i} \binom{k}{i}=k! \left\{ {p \atop k}\right\},
$$

we get

$$
\sum_{i=1}^{n-1} i^p f_i=S_1(G)+\sum_{k=1}^{n-1 } k! \left\{ {p \atop k}\right\} S_k(G).
$$

Since  $\displaystyle \left\{ {p \atop k}\right\} = 0 $ for $k>p$ we  thus obtain

$$
\sum_{i=1}^{n-1} i^p f_i=S_1(G)+\sum_{k=1}^{p } k! \left\{ {p \atop k}\right\} S_k(G).
$$

\end{proof}

Let 

$$
\mathcal{G}(Z,t)=\sum_{p=0}^\infty Z_p(G) t^p,
$$

be the ordinary generating functions of the sequence of the first  general  Zagreb indexes.

Let us express the generating functions $\mathcal{G}(Z,t)$  in terms of the   frequently and star sequences. The following theorem holds.

\begin{te} Let $\mathcal{G}(Z,t)$   be the  ordinary generating functions of the sequence of the first  general  Zagreb indexes. Then  

\begin{align*}
(i) & \,\mathcal{G}(Z,t)=\frac{\displaystyle \sum_{k=0}^{n-1} \left( \sum_{i=0}^k \left [ {n+1 \atop n+1-(k-i)}\right ] Z_i(G) \right) t^k }{(1-t)(1-2 t) \cdots (1-n t)},\\
(ii) & \, Z_p(G)+\sum_{i=1}^{n} \left [ {p+1 \atop p+1- i}\right ] Z_{p-i}(G)=0, p>n, 
\end{align*}

 where $\displaystyle \left [ {p \atop i}\right ]$  is the signed Stirling numbers of the first kind.

\end{te}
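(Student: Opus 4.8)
The plan is to route everything through the partial-fraction form of $\mathcal{G}(Z,t)$ coming from the frequency sequence, so that both $(i)$ and $(ii)$ become consequences of a single polynomial identity for the denominator.

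First I would substitute the expression $Z_p(G)=\sum_{i} i^{p} f_i$ used in the proof of the preceding theorem (with the convention $0^0=1$, which recovers $Z_0(G)=n$) into the definition of $\mathcal{G}(Z,t)$ and interchange the order of summation. Using the geometric series $\sum_{p\ge 0}(it)^p=\frac{1}{1-it}$ this yields at once
$$\mathcal{G}(Z,t)=\sum_{i=0}^{n-1}\frac{f_i}{1-it}.$$
Hence $\mathcal{G}(Z,t)$ is a rational function whose only poles sit at the points $t=1/i$ with $1\le i\le n-1$; after clearing denominators its denominator divides $(1-t)(1-2t)\cdots(1-nt)$ and the numerator $N(t)$ is a genuine polynomial. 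Since the expansion has only simple poles, $N(t)$ has degree at most $n-1$.

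The decisive step is to expand this denominator through the signed Stirling numbers of the first kind, that is, to prove
$$(1-t)(1-2t)\cdots(1-nt)=\sum_{j=0}^{n}\left[{n+1\atop n+1-j}\right]t^{j}.$$
I would derive this by setting $x=1/t$ in the falling-factorial expansion $x(x-1)\cdots(x-n)=\sum_{k}\left[{n+1\atop k}\right]x^{k}$ and then multiplying through by $t^{n+1}$; the extra factor $x$ in this $(n{+}1)$-fold product is precisely what raises the Stirling index from $n$ to $n+1$. This is the part I expect to require the most care, because one has to track the index reversal $j=n+1-k$ and confirm that the resulting coefficient of $t^{j}$, namely $(-1)^{j}$ times the elementary symmetric function $e_{j}(1,\dots,n)$, equals the signed Stirling number $\left[{n+1\atop n+1-j}\right]$.

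Granting the denominator identity, part $(i)$ is a Cauchy product: multiplying $\mathcal{G}(Z,t)=\sum_{p}Z_p(G)t^{p}$ by $\sum_{j}\left[{n+1\atop n+1-j}\right]t^{j}$ and collecting the coefficient of $t^{k}$ gives exactly $\sum_{i=0}^{k}\left[{n+1\atop n+1-(k-i)}\right]Z_i(G)$, which is the announced numerator. For part $(ii)$ I would use that $N(t)$ has degree at most $n-1$, so for every $p>n$ the coefficient of $t^{p}$ in $N(t)=\mathcal{G}(Z,t)\cdot(1-t)\cdots(1-nt)$ must vanish. Equating that convolution coefficient to zero and using $\left[{n+1\atop n+1}\right]=1$ for the leading term produces the recurrence $Z_p(G)+\sum_{i=1}^{n}\left[{n+1\atop n+1-i}\right]Z_{p-i}(G)=0$, whose constant coefficients are precisely the entries read off from the denominator identity (these coefficients depend on $n$, not on $p$, as must be the case for a fixed rational generating function).
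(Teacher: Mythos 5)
Your route is genuinely different from the paper's, and mostly sound. The paper substitutes its Theorem~4 formula $Z_p(G)=2S_1(G)+\sum_{i=2}^{p} i! \left\{ {p\atop i}\right\} S_i(G)$ and then uses the generating function $\sum_{p\ge 1} \left\{ {p\atop i}\right\} t^p = t^i/\bigl((1-t)(1-2t)\cdots(1-it)\bigr)$ for the Stirling numbers of the second kind to land on a rational function with denominator $(1-t)(1-2t)\cdots(1-nt)$; you instead bypass the star sequence entirely, writing $Z_p(G)=\sum_i i^p f_i$ and summing geometric series to get the partial-fraction form $\mathcal{G}(Z,t)=\sum_{i=0}^{n-1} f_i/(1-it)$, which is shorter and makes the pole structure transparent. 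Both arguments then hinge on the identity $(1-t)(1-2t)\cdots(1-nt)=\sum_{j=0}^{n}\left[ {n+1 \atop n+1-j}\right] t^j$, which the paper asserts without proof and which you correctly derive from $x(x-1)\cdots(x-n)=\sum_k \left[ {n+1 \atop k}\right] x^k$ at $x=1/t$. For part $(ii)$ the paper cites Stanley's Theorem 4.1.1, while you inline the standard coefficient-extraction argument --- same content. Your closing observation that the recurrence coefficients are $\left[ {n+1 \atop n+1-i}\right]$, depending on $n$ and not on $p$, agrees with the paper's proof; the $\left[ {p+1 \atop p+1-i}\right]$ in the printed statement is a typo.

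There is, however, one genuine flaw: your claim that ``since the expansion has only simple poles, $N(t)$ has degree at most $n-1$.'' Simple poles bound only the proper part of a rational function; your own expansion contains a polynomial (constant) part, namely the $i=0$ term $f_0$. Clearing denominators gives $N(t)=f_0\prod_{i=1}^{n}(1-it)+\sum_{j=1}^{n-1} f_j \prod_{i\ne j}(1-it)$, whose coefficient of $t^n$ is $(-1)^n n!\, f_0$. So $\deg N = n$ whenever $G$ has an isolated vertex, and part $(i)$ as stated actually fails in that case: for $G=K_1$ one has $\mathcal{G}(Z,t)=1$, while $(i)$ would give $1/(1-t)$. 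To be fair, the paper's own proof makes the same degree-$(n-1)$ assertion silently (its constant term $n$ contributes degree $n$ over the common denominator as well), so you have reproduced the paper's gap rather than introduced a new one --- but the ``simple poles'' justification is precisely the incorrect step. The repair is to assume $f_0=0$, or to let the numerator sum run to $k=n$ with $a_n=\sum_{i=0}^{n}\left[ {n+1 \atop i+1}\right] Z_i(G)=(-1)^n n!\, f_0$. Part $(ii)$ is unaffected: $\deg N \le n$ holds unconditionally, so the coefficient of $t^p$ in $N(t)=\mathcal{G}(Z,t)\prod_{i=1}^n(1-it)$ vanishes for every $p>n$, which is exactly the range the statement claims.
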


\begin{proof}

$(i)$    Since

$$
Z_p(G)=2S_1(G)+\sum_{i=2}^{p} i! {\left\{ {p\atop i}\right\}} S_i(G),
$$

then 

\begin{gather*}
\mathcal{G}(Z,t)=\sum_{p=0}^\infty Z_p(G) t^p=Z_0(G)+\sum_{p=1}^\infty \left(2S_1(G)+\sum_{i=2}^{p} i! {\left\{ {p\atop i}\right\}} S_i(G) \right) t^p=\\=Z_0(G)+\sum_{p=1}^\infty 2S_1(G) t^p+\sum_{p=1}^\infty \sum_{i=2}^{p} i! {\left\{ {p\atop i}\right\}} S_i(G) t^p=\\
=Z_0(G)+2S_1(G)\sum_{p=1}^\infty  t^p+ \sum_{i=2}^{\infty} \left(\sum_{p=1}^\infty  {\left\{ {p\atop i}\right\}}  t^p \right)i!S_i(G)=\\=
n+\frac{2S_1(G) t}{1-t}+\sum_{i=2}^{n} \frac{i!S_i(G) t^i}{(1-t)(1-2 t) \cdots (1-i t)}.
\end{gather*}

Here we used the well-known formula for the generating function for the Stirling numbers of the second kind:

$$
\sum_{p=1}^{\infty}  {\left\{ {p\atop i}\right\}}  t^p=\frac{ t^i}{(1-t)(1-2 t) \cdots (1-i t)}.
$$

After simplification we get 

$$
\mathcal{G}(Z,t)=\frac{a_0+a_1 t+a_2 t^2+\cdots+a_{n-1} t^{n-1}}{(1-t)(1-2 t) \cdots (1-n t)}, 
$$

for some unknown numbers $a_0, a_1, \ldots, a_{n-1}.$  To define the numbers let us observe that 

$$
(1-t)(1-2 t) \cdots (1-n t)=\sum_{i=0}^n  \left [ {n+1 \atop n+1-i}\right ] t^i.
$$

Now 

\begin{gather*}
a_0+a_1 t+a_2 t^2+\cdots+a_{n-1} t^{n-1}=\\=(1-t)(1-2 t) \cdots (1-n t) \left(  Z_0(G)+Z_1(G) t+\cdots+Z_n(G) t^n+\cdots \, \right)=\\
=\left(\sum_{i=0}^n  \left [ {n+1 \atop n+1-i}\right ] t^i \right) \left(  Z_0(G)+Z_1(G) t+\cdots+Z_n(G) t^n+\cdots \, \right).
\end{gather*}

Equating coefficients of $t^k$  yelds 

$$
a_k=\sum_{i=0}^k  \left [ {n+1 \atop n+1-(k-i)}\right ] Z_i(G).
$$

Therefore

\begin{gather*}
\mathcal{G}(Z,t)=
\frac{\displaystyle \sum_{k=0}^{n-1} \left( \sum_{i=0}^k \left [ {n+1 \atop n+1-(k-i)}\right ] Z_i(G) \right) t^k }{(1-t)(1-2 t) \cdots (1-n t)}.
\end{gather*}

$(ii)$ 

The generating function $\mathcal{G}(Z,t)$ is a rational function  with the denominator 

$$
(1-t)(1-2 t) \cdots (1-n t)=\sum_{i=0}^n  \left [ {n+1 \atop n+1- i}\right ] t^i.
$$

Then Theorem 4.1.1 \cite{St-En_1} immediatelly implies that  the sequence  of the first general Zagreb indices  $Z_0(G), Z_1(G), \ldots Z_p(G), \ldots $ satisfies the  recurrence relation

$$
Z_p(G)+\sum_{i=1}^{n} \left [ {n+1 \atop n+1- i}\right ] Z_{p-i}(G)=0,
$$

for all  $p  \geq n.$

\end{proof}

\end{document}